\documentclass[11pt]{amsart}
\usepackage[margin=1in]{geometry}

\usepackage{amsmath}
\usepackage[linkcolor=blue,colorlinks=true,citecolor=blue]{hyperref}
\usepackage{graphicx}
\usepackage{subcaption}
\usepackage{cleveref}
\usepackage{xcolor}

\usepackage{thmtools}
\usepackage{thm-restate}

\newcommand{\be}{\mathbf{e}}

\newcommand{\R}{\mathbb{R}}
\newcommand{\Z}{\mathbb{Z}}

\newcommand{\SL}{\mathrm{SL}}

\newtheorem{theorem}{Theorem}
\newtheorem{lemma}[theorem]{Lemma}
\newtheorem{proposition}[theorem]{Proposition}
\newtheorem{corollary}[theorem]{Corollary}

\newtheorem{question}[theorem]{Question}

\theoremstyle{definition}
\newtheorem{remark}[theorem]{Remark}

\makeatletter
\@namedef{subjclassname@2020}{\textup{2020} Mathematics Subject Classification}
\makeatother

\title{Densities of arithmetic Hecke triangle group orbits}

\author[Fairchild]{Samantha Fairchild}
\address[S. Fairchild]{Department of Mathematics and Computer Science\\Eindhoven university of Technology\\  PO Box 513
5600 MB \\Eindhoven\\The Netherlands}
\email{\href{mailto:s.k.fairchild@tue.nl}{\texttt{s.k.fairchild@tue.nl}}}
\urladdr{\href{https://sites.google.com/view/sfairchild/home}{sites.google.com/view/sfairchild}}

\author[Hanusa]{Christopher R.\ H.\ Hanusa}
\address[C.\ R.\ H.\ Hanusa]{Department of Mathematics \\ Queens College (CUNY) \\ 65-30 Kissena Blvd. \\ Queens, NY 11367\\ United States}
\email{\href{mailto:chanusa@qc.cuny.edu}{\texttt{chanusa@qc.cuny.edu}}}
\urladdr{\href{https://hanusa.xyz/}{hanusa.xyz}}

\subjclass[2020]{11A25, 11F06, 11N37, 11P21}

\keywords{Hecke triangle group, primitive integers, asymptotic density, Euler summatory function, Gauss circle problem, nonuniform lattice orbits}

\begin{document}

\begin{abstract}
We give new proofs computing the asymptotic densities for orbits of the arithmetic Hecke triangle groups $\Gamma_q$ when $q=4$ and $q=6$. 
We use elementary number theory techniques along with basic properties of the Möbius function and Riemann zeta function with additional congruence conditions. 
This note actually came about by observing that, in the $q=4$ case, the underlying congruence condition partitions the set of coprime integer pairs into three classes of equal density.
\end{abstract}
\maketitle

\section{Introduction}
\label{sec:intro}
Let $\SL(2,\R)$ or $\SL(2,\Z)$ denote the space of $2\times 2$ matrices with determinant $1$ and entries in $\R$ or~$\Z$.
We consider the {\em Hecke triangle groups} $\Gamma_q < SL(2,\R)$ for integers $q\geq 3$ generated by 
$$\Gamma_q =  \left\langle \begin{pmatrix}0 & 1 \\ -1 & 0\end{pmatrix}, \begin{pmatrix}1 & \lambda_q \\ 0 & 1\end{pmatrix} \right\rangle,$$
where $\lambda_q = 2\cos(\pi/q)$ \cite{LangLangHecke}. The orbit $\Gamma_q\be_1$ is always a discrete subset of $\R^2$ \cite[Theorem~3.2]{Dalbo12}.  When $q=3$, $\Gamma_3 = \SL(2,\Z)$ so $\Gamma_3 \be_1 \subseteq \Z^2$. In fact, the determinant $1$ condition implies the orbit $\Gamma_3 \be_1$ is the set of \emph{primitive integers},
$$\Z^2_{prim} = \left\{\begin{pmatrix}
    a\\b
\end{pmatrix}\in \Z^2: \gcd(a,b) = 1\right\}.$$ While $\Gamma_3 \be_1$ is well understood, the purpose of this paper is to describe the distribution of $\Gamma_q \be_1$ over the Euclidean plane when $q=4$ and $q=6$, in which $\lambda_q=\sqrt{q/2}$. (See Figure~\ref{fig:TriangleGroupOrbitsq346}.)

\begin{figure}[ht]
    \centering
    \begin{subfigure}[b]{0.3\textwidth}
        \centering
        \includegraphics[width=\textwidth]{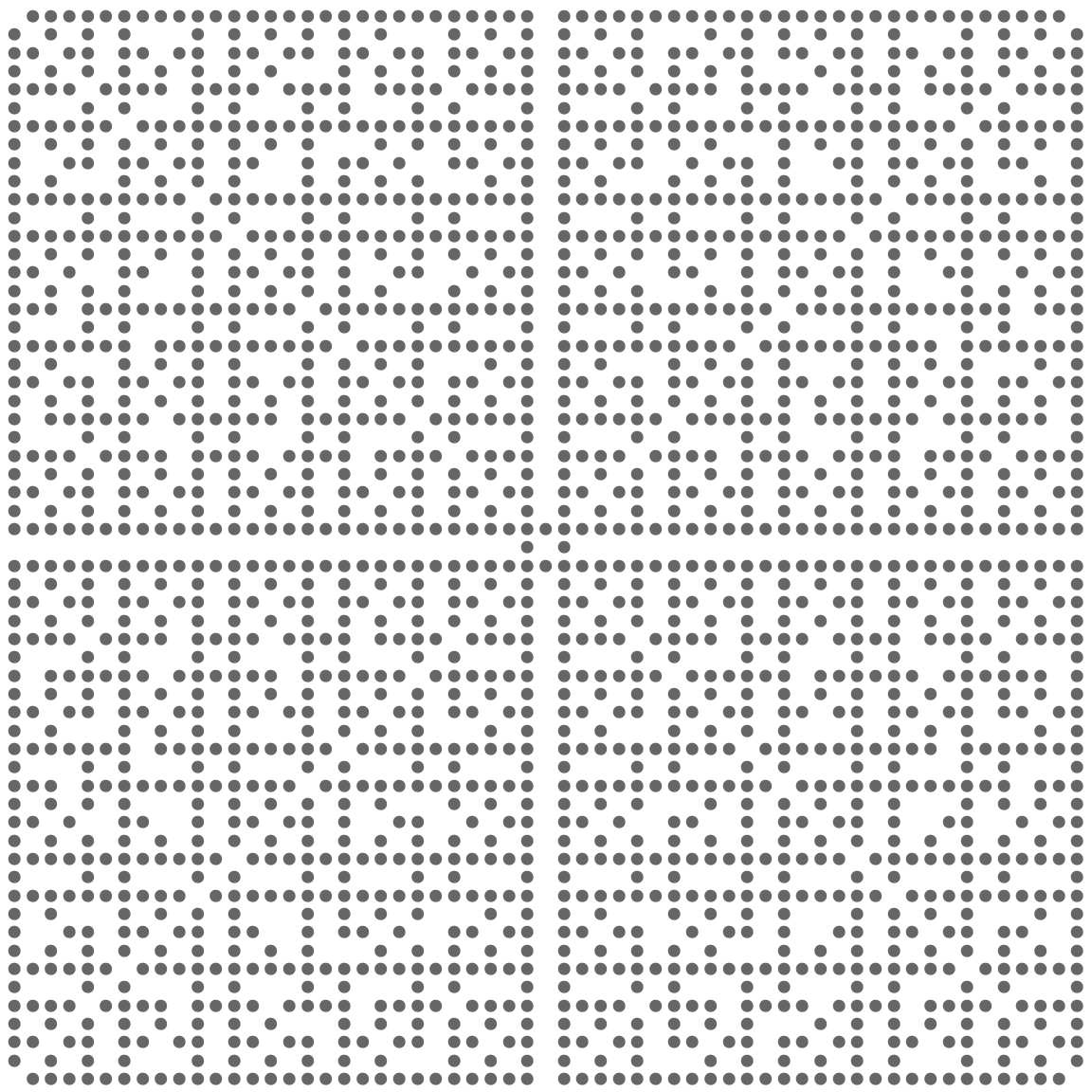}
        \caption{$q=3$}
        \label{fig:sub1}
    \end{subfigure}
    \hfill
    \begin{subfigure}[b]{0.3\textwidth}
        \centering
        \includegraphics[width=\textwidth]{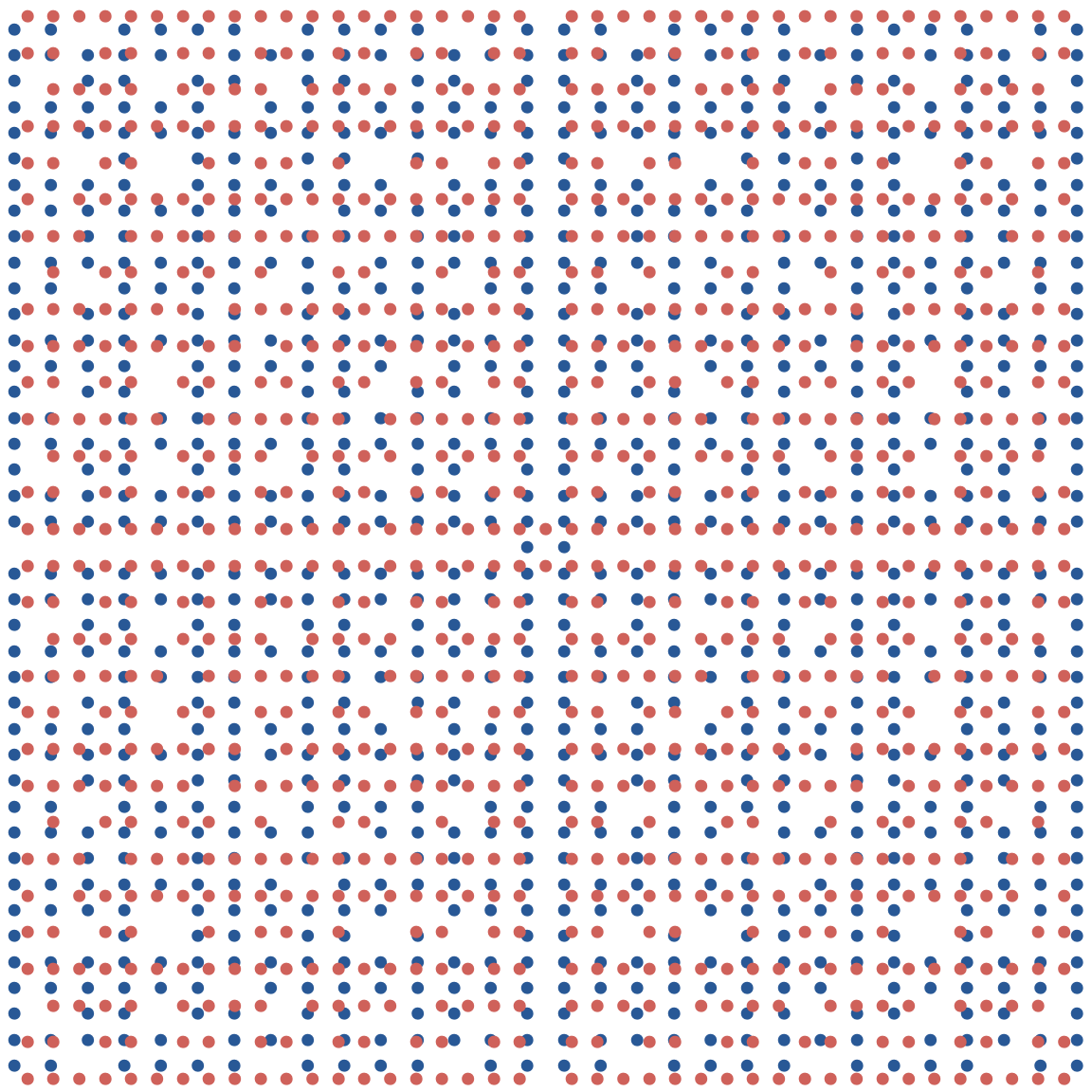}
        \caption{$q=4$}
        \label{fig:sub2}
    \end{subfigure}
    \hfill
    \begin{subfigure}[b]{0.3\textwidth}
        \centering
        \includegraphics[width=\textwidth]{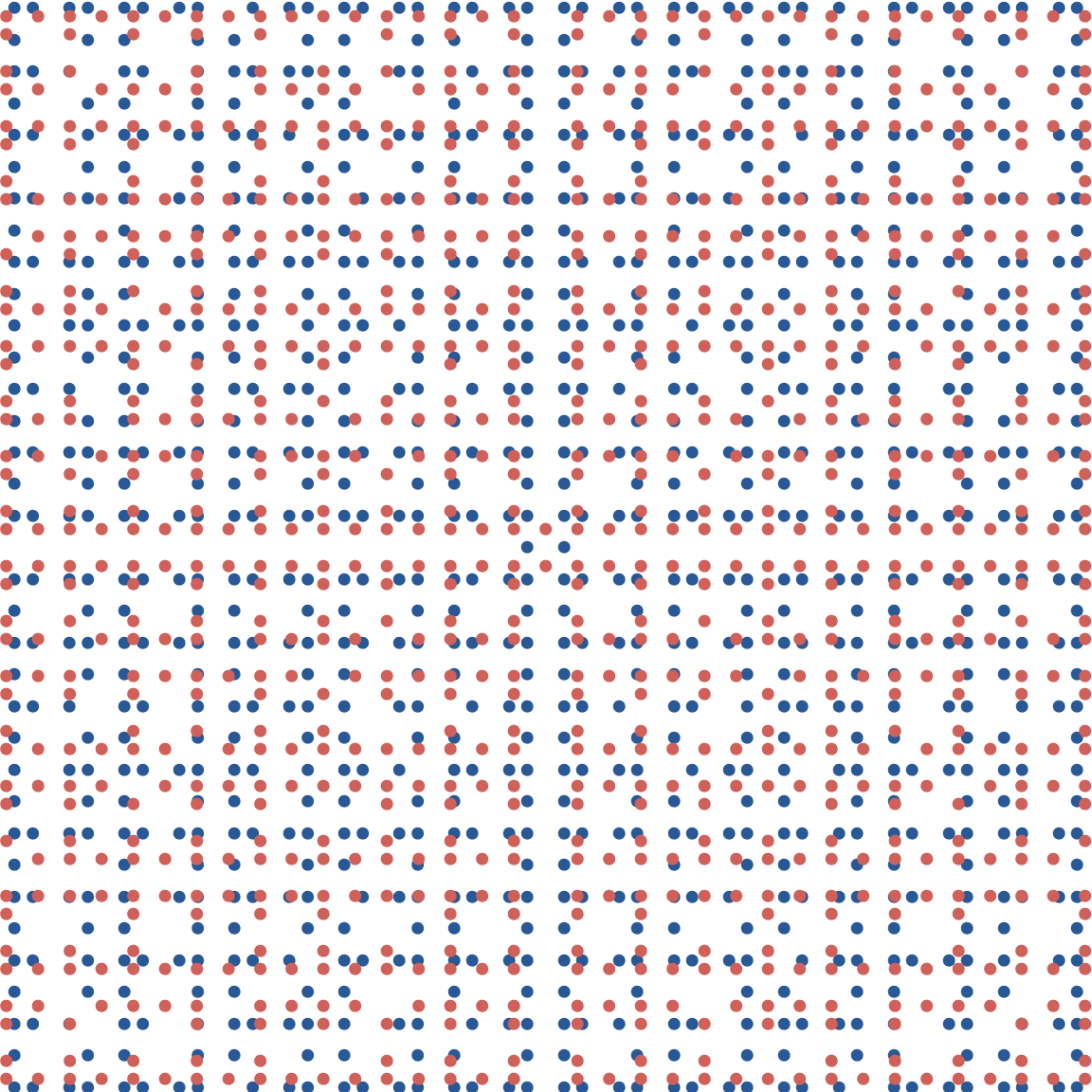}
        \caption{$q=6$}
        \label{fig:sub3}
    \end{subfigure}

    \caption{The orbits of $\Gamma_q\be_1$. For $q=3$ we obtain the primitive integer lattice. For $q=4$ and $q=6$ the points are of the form $(a,b\lambda_q)$ and $(a\lambda_q,b)$, which we indicate by dark blue and light red, respectively.}
    \label{fig:TriangleGroupOrbitsq346}
\end{figure}

The study of the distribution of the primitive integer lattice is a venerable problem going back to the time of Gauss. The \emph{primitive Gauss circle problem} is concerned with determining the error term when counting pairs of primitive integers in the disk $B(R)$ centered at the origin with radius~$R$, 
$$\#\{\Z^2_{prim} \cap B(R)\} \sim \frac{6}{\pi^2} \pi R^2,$$
where $f(R)\sim g(R)$ means $\lim_{R\to\infty} f(R)/g(R) = 1$.
Determining the error term is part of the study of the geometry of numbers; a proof of the asymptotic density, first given by Gauss, is seen for example in \cite[Section 4.1]{Olds}. While the error term of density depends on the shape of the region, the main term does not \cite[Theorem 3.9]{Apostol}. As such, by choosing the square $[-R,R]^2$ as our boundary, the asymptotic density is the same:
$$\lim_{R\to\infty}\frac{\#\{\Z^2_{prim} \cap B(R)\}}{\pi R^2} = \lim_{R\to\infty}\frac{\# \{\Z^2_{prim} \cap [-R,R]^2\}}{4 R^2} = \frac{6}{\pi^2}. $$
Changing the shape of the boundary allows us to prove density results.

Our focus is to understand what happens when discrete subsets of the plane are not necessarily part of a lattice, namely the subsets $\Gamma_q\be_1$ for $q= 4,6$.
The following theorem is our main result.

\begin{restatable}[]{theorem}{density}
\label{thm:density}
$$\lim_{R\to\infty}\frac{\# \{\Gamma_4\be_1\cap [-R,R]^2\}}{4R^2} = \frac{4\sqrt{2}}{\pi^2}
\qquad\textup{ and }\qquad
\lim_{R\to\infty}\frac{\# \{\Gamma_6\be_1\cap [-R,R]^2\}}{4R^2} = \frac{3\sqrt{3}}{\pi^2}.
$$
\end{restatable}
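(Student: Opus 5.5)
The plan has two stages: first describe $\Gamma_q\be_1$ explicitly as a union of two rescaled sets of primitive integer pairs satisfying a congruence condition, and then count these sets in boxes using Möbius inversion.

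\emph{Stage 1: describing the orbit.} Write $\lambda=\lambda_q$ with $\lambda^2=p$, so $p=2$ for $q=4$ and $p=3$ for $q=6$. I would prove
$$\Gamma_q\be_1=\left\{\begin{pmatrix}a\\ b\lambda\end{pmatrix}: \gcd(a,pb)=1\right\}\ \cup\ \left\{\begin{pmatrix}a\lambda\\ b\end{pmatrix}: \gcd(pa,b)=1\right\},$$
with $a,b\in\Z$. The union is disjoint because $\lambda$ is irrational; the only exception would be the origin, which does not occur. Denote the right-hand side by $S$.

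For $\Gamma_q\be_1\subseteq S$, note that $\be_1\in S$. It then suffices to check that $S$ is invariant under $S_0=\begin{pmatrix}0&1\\-1&0\end{pmatrix}$ and $T^{\pm1}$, where $T=\begin{pmatrix}1&\lambda\\0&1\end{pmatrix}$.
\begin{itemize}
\item The rotation $S_0$ swaps the two pieces and preserves the conditions.
\item $T$ sends $(a,b\lambda)$ to $(a+pb,\,b\lambda)$. Since $\gcd(a+pb,pb)=\gcd(a,pb)$, the condition is preserved.
\item $T$ sends $(a\lambda,b)$ to $((a+b)\lambda,\,b)$. If $\gcd(pa,b)=1$, then $\gcd(p(a+b),b)=\gcd(pa,b)=1$, because $p\nmid b$.
\end{itemize}

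The reverse inclusion $S\subseteq\Gamma_q\be_1$ is the step I expect to be the main obstacle. I would use a Euclidean-algorithm descent. Take $v\in S$ that is not $\pm\be_1$ or $\pm\be_2$. Apply powers of $T$ and $S_0$ to shrink the quantity $a^2+pb^2$ (or $pa^2+b^2$ on the other piece), staying inside $S$ by the invariance above.

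The delicate point is showing the descent strictly decreases this norm except at the base points. On the first piece, replacing $a$ by its remainder modulo $pb$ in $(-p|b|/2,\,p|b|/2]$ and then applying $S_0$ lands on the other piece. The congruence condition, $p\nmid a$ or $p\nmid b$ respectively, rules out the stuck cases, such as $|a|=p|b|/2$. This would be checked separately for $p=2$ and $p=3$.

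\emph{Stage 2: counting.} For $p$ prime, let $N_p(X,Y)$ be the number of coprime pairs $(a,b)$ with $|a|\le X$, $|b|\le Y$ and $p\nmid a$. Möbius inversion over $d=\gcd(a,b)$ restricted to $p\nmid d$, combined with inclusion–exclusion on $p\mid a$, gives
$$N_p(X,Y)\sim 4XY\cdot\frac{6}{\pi^2}\cdot\frac{1-1/p}{1-1/p^2}=4XY\cdot\frac{6}{\pi^2}\cdot\frac{p}{p+1}.$$
For $p=2$ this fraction $2/3$ reflects the equidistribution of coprime pairs among the three parity classes mentioned in the abstract.

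The first piece of $S$ intersected with $[-R,R]^2$ is counted by $N_p(R,R/\lambda)$. The second piece has the same count by the symmetry $a\leftrightarrow b$. Summing the two pieces and dividing by $4R^2$ gives the density
$$\frac{2}{\sqrt p}\cdot\frac{6}{\pi^2}\cdot\frac{p}{p+1}.$$
For $p=2$ this equals $4\sqrt2/\pi^2$, and for $p=3$ it equals $3\sqrt3/\pi^2$, which are the two values claimed in the theorem.
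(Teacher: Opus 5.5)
Your proof is correct and gives the right constants, but it departs from the paper in two ways.

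\textbf{Orbit description.} The paper does not prove it. It quotes \cite[Remark 3.9]{LangLangHecke} for the fact that $\Gamma_q\be_1$ consists of the vectors $(a,b\lambda_q)$ with $\gcd(a,b\lambda_q^2)=1$ and $(a\lambda_q,b)$ with $\gcd(a\lambda_q^2,b)=1$. You prove both inclusions yourself (invariance of $S$ under the generators, then a Euclidean descent), so your argument is self-contained.

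Your justification of the descent needs one fix. Strict decrease is not supplied by the congruence condition, and for $p=2$ the case $|a|=p|b|/2$ is not excluded by it; for example $(1,\lambda_4)\in S$. What makes the descent terminate is $\lambda_q^2=p<4$. Once you reduce to $|a'|\le p|b|/2$ and apply $S_0$, the next reduction replaces $|b|$ by something of size at most $|a'|/2\le p|b|/4<|b|$. So every move after the first strictly lowers the norm. The coprimality and congruence conditions enter only through the invariance of $S$, to identify the terminal vector $(x,0)\in S$ as $\pm\be_1$.

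\textbf{Counting.} The paper's Lemma~\ref{lem:counting_rectangles} imposes the finer conditions $a\equiv a_0\bmod p$ and $b\not\equiv 0\bmod p$. It sums over $a_0$ to get the density $\frac{6}{\pi^2}\frac{p-1}{p+1}$ of $E_p'$. It then recovers $C_p'$ by subtracting from the total $6/\pi^2$ and using the $a\leftrightarrow b$ symmetry. You instead apply M\"obius inversion directly to the single condition $p\nmid a$, which is shorter for the theorem itself. Running the same complement-and-symmetry step in reverse gives you the $E_p'$ density, and the three-way split for $p=2$, if you want them.
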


\begin{proof} Figure~\ref{fig:TriangleGroupOrbitsq346} provides the intuition for how we prove Theorem~\ref{thm:density}. When $q=4,6$, the points in $\Gamma_q\be_1$ partition nicely into two symmetric sets $C_q$ and $D_q$ given explicilty in \Cref{prop:description} below. Furthermore, Corollary~\ref{cor:CDdensity} at the end of \Cref{sec:primes} proves that 
$$\lim_{R\to\infty}\frac{\# \{C_q\cap [-R,R]^2\}}{4R^2} 
=
\lim_{R\to\infty}\frac{\# \{D_q\cap [-R,R]^2\}}{4R^2} = \frac{6}{\pi^2}\frac{\lambda_q}{\lambda_q^2+1},
$$
from which Theorem~\ref{thm:density} holds.
\end{proof}

\begin{proposition}
\label{prop:description}
Let $q=4,6$. The set $\Gamma_q\be_1$ partitions into the two sets
\[C_q=\left\{\begin{pmatrix}
      a\\ b\lambda_q
  \end{pmatrix}:\begin{pmatrix}a\\b\end{pmatrix}\in \Z^2_{prim}, a\not\equiv 0\bmod \lambda_q^2\right\}\textup{ and }D_q=\left\{\begin{pmatrix}
      a\lambda_q\\ b
  \end{pmatrix}:\begin{pmatrix}a\\b\end{pmatrix}\in \Z^2_{prim}, b\not\equiv 0\bmod \lambda_q^2\right\}.\]
\end{proposition}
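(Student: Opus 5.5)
Write $\lambda=\lambda_q$ and $m=\lambda^2\in\{2,3\}$. The plan is to prove two inclusions, $\Gamma_q\be_1\subseteq C_q\cup D_q$ and $C_q\cup D_q\subseteq\Gamma_q\be_1$, and then check that $C_q\cap D_q=\emptyset$. The tool for both inclusions is an explicit description of $\Gamma_q$ as a set of matrices.

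\textbf{Step 1: a matrix description of $\Gamma_q$.} Let $\Lambda_q$ be the set of determinant-one matrices of the following two types:
\begin{itemize}
\item type I: $\begin{pmatrix} a & b\lambda\\ c\lambda & d\end{pmatrix}$ with $a,b,c,d\in\Z$;
\item type II: $\begin{pmatrix} a\lambda & b\\ c & d\lambda\end{pmatrix}$ with $a,b,c,d\in\Z$.
\end{itemize}
A direct multiplication shows that type$\cdot$type multiplies like $\Z/2$: I$\cdot$I and II$\cdot$II are of type I, while mixed products are of type II. Here $\lambda^2=m\in\Z$ is what keeps the entries in the right form. So $\Lambda_q$ is a group, and both generators lie in it: $S$ has type II with $a=d=0$, and $T$ has type I. Hence $\Gamma_q\subseteq\Lambda_q$.

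\textbf{Step 2: the constraint that defines $\Gamma_q$.} I expect $\Gamma_q$ to be the subgroup cut out by one congruence:
\begin{itemize}
\item for type I, $a\not\equiv 0$ and $d\not\equiv 0 \pmod m$;
\item for type II, $b\not\equiv 0$ and $c\not\equiv 0 \pmod m$.
\end{itemize}
There are two things to check.
\begin{itemize}
\item \emph{The generators satisfy it and it is preserved by multiplication by the generators.} Note that $ad-mbc=1$ already forces $ad\equiv1\pmod m$ in type I, and likewise $bc\equiv -1$ in type II. So the condition holds automatically, and the real content of the description is only the type decomposition.
\item \emph{Reverse inclusion.} Every matrix in $\Lambda_q$ lies in $\Gamma_q$. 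I would show this by a Euclidean-type descent. Multiply by $T^k$ to reduce $|a|$ against $|c\lambda|$ (or the corresponding entries in type II), and apply $S$ to swap the roles of the columns. Since $m\le 3$ and $\lambda<2$, the reduction $x\mapsto x-k\lambda y$ can always be chosen with $|x-k\lambda y|\le \lambda|y|/2<|y|$. This strict decrease is exactly where $q=4,6$ is needed. The descent terminates at a matrix with a zero entry, and such matrices are easily seen to be products of $S$ and powers of $T$.
\end{itemize}
I expect this descent, with its case bookkeeping on which of the two types one is in, to be the main obstacle. An alternative route is to invoke the classical fact that $\Gamma_4$ and $\Gamma_6$ are commensurable with $\SL(2,\Z)$ and are given by exactly this description (Hecke, Leutbecher).

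\textbf{Step 3: reading off the orbit.} Since $\Gamma_q=\Lambda_q$, the orbit $\Gamma_q\be_1$ is the set of first columns of matrices in $\Lambda_q$. A type I first column is $(a,c\lambda)$, and the determinant condition $ad-mbc=1$ is solvable in $b,d$ exactly when $\gcd(a,mc)=1$. This means $\gcd(a,c)=1$ and $m\nmid a$ (recall $m$ is prime), which gives $C_q$. A type II first column is $(a\lambda,c)$, where $mad-bc=1$ is solvable exactly when $\gcd(ma,c)=1$, i.e.\ $\gcd(a,c)=1$ and $m\nmid c$, which gives $D_q$.

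\textbf{Step 4: disjointness.} Since $\lambda$ is irrational, a vector $(a,b\lambda)$ with $a,b\in\Z$ can equal a vector $(a'\lambda,b')$ only if $a=a'=0$ and $b=b'=0$. This cannot happen: membership in $C_q$ requires $m\nmid a$, which rules out $a=0$. So the union is disjoint, and $\Gamma_q\be_1=C_q\sqcup D_q$, as claimed in \Cref{prop:description}.
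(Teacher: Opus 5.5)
Your proof is correct, but it takes a different route from the paper. The paper proves nothing about $\Gamma_q$ itself. It quotes \cite[Remark 3.9]{LangLangHecke} for the fact that $\Gamma_q\be_1$ consists of the vectors $(a,b\lambda_q)$ with $\gcd(a,b\lambda_q^2)=1$ and $(a\lambda_q,b)$ with $\gcd(a\lambda_q^2,b)=1$. Its only argument is the translation you also make in Step 3: primality of $\lambda_q^2$ turns these conditions into $\gcd(a,b)=1$ plus $\lambda_q^2\nmid a$ (resp.\ $\lambda_q^2\nmid b$).

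Your Steps 1--3 instead reprove the cited input. You identify $\Gamma_q$ with the graded group $\Lambda_q$ by a closure argument plus a Euclidean-type descent, and then read off first columns via B\'ezout. This gives three things:
\begin{itemize}
\item a self-contained, elementary proof in the spirit of the paper;
\item the stronger statement $\Gamma_q=\Lambda_q$;
\item in Step 4, an explicit check that $C_q\cap D_q=\emptyset$, which the paper leaves implicit.
\end{itemize}

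The cost is the descent, where one point needs tightening. A strictly decreasing sequence of reals need not terminate. Moreover, the inequality $|x-k\lambda y|\le\lambda|y|/2<|y|$ holds for every Hecke group, since $\lambda_q<2$ for all $q$. So that inequality is not where $q=4,6$ is used.

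What $q=4,6$ actually provides is discreteness. Every entry of a matrix in $\Lambda_q$ lies in $\Z\cup\lambda\Z$, so its square is an integer. The squares of the lower-left entries therefore form a strictly decreasing sequence in $\Z_{\ge 0}$, which must reach $0$. At that point a type II matrix is impossible, since it would force $mad=1$. So the matrix is $\pm T^n$, and $-I=S^2\in\Gamma_q$ finishes the argument.

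Also, left multiplication by $S$ swaps the two entries of the first column (that is, rows), not columns.
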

\begin{proof}
       By \cite[Remark 3.9]{LangLangHecke}, columns of $\Gamma_q\be_1$ are of the form
    $(a,b\lambda_q)$ with $\gcd(a , b\lambda_q^2) = 1$ or $(a\lambda_q,b)$ with $\gcd(a \lambda_q^2, b) = 1$ for $a,b \in \Z$. We define $C_q$ to be the set of vectors of the first form and $D_q$ to be the set of vectors of the second form.
    Since $\lambda_q^2$ is prime, $\gcd(a , b\lambda_q^2) = 1$ is equivalent to the conditions that $\gcd(a,b)=1$ and $a\not\equiv 0\bmod \lambda_q^2$, which establishes our characterization of $C_q$. Our characterization of $D_q$ follows by symmetry.
\end{proof}

This work features congruences modulo the prime numbers $\lambda_q^2=q/2=2$ and $3$. In \Cref{sec:primes}, the results to obtain \Cref{cor:CDdensity} are proven in full generality for all prime numbers. This leads to \Cref{cor:prime_density}, an asymptotic density result about which primitive pairs occur in both $C_q$ and $D_q$. Finally, \Cref{sec:conclusion} concludes with some questions for future study.

\medskip
We now discuss connections to previous work. The groups $\Gamma_q$ for $q=3,4,6$ are the only arithmetic Hecke triangle groups \cite{Arithmetic_Triangle_Groups_Takeuchi}. Studying their number-theoretic properties is an active area of research, for example, in extreme value theory for $q=3$ \cite{Pollicott_EVT},  Diophantine approximation for $q=4$ \cite{H4_Diophantine}, and spectral theory for $q=6$ \cite{H6_Spectrum}. 

Furthermore, each $\Gamma_q$ forms a discrete subgroup of $\SL(2,\R)$, which is a \emph{nonuniform lattice}. 
Nonuniform lattice orbits were the primary objects of study in the  papers \cite{Veech89, Veech98} motivated by the study of translation surfaces (see \cite{Translation_Surfaces_Book}). The state of the art for asymptotic counting estimates on nonuniform lattice orbits uses analytic number theory techniques \cite{BurrinNevoRuhrWeiss, BurrinFairchildChaika}. In both of these situations, the main leading term is obtained through computing the co-volume. In contrast, in this article we rely on the density of the primitive integers and use scaling factors.

\section{Computing Densities}\label{sec:primes}

In this section, we prove the asymptotic density of the sets $C_q$ and $D_q$ from \Cref{prop:description}. To do so, we note that for $q= 4,6$, $p = q/2$ is a prime number. We will prove results featuring congruences modulo any prime number, and explore the scaled sets 
\[C_p'=\left\{\begin{pmatrix}a\\b\end{pmatrix}\in \Z^2_{prim}: a\not\equiv 0\bmod p\right\}\!\!\textup{ and }D_p'=\left\{\begin{pmatrix}a\\b\end{pmatrix}\in \Z^2_{prim}: b\not\equiv 0\bmod p\right\}\!\,.\]
Figure~\ref{fig:posprim} provides a visualization of these sets along with their intersection 
\[E_p'=\left\{\begin{pmatrix}a\\b\end{pmatrix}\in \Z^2_{prim}: a,b\not\equiv 0\bmod p\right\}.\]
The blue circles with vertical dashes are the elements of $C_p'\setminus E_p'$, the red circles with horizontal dashes are the elements of $D_p'\setminus E_p'$, and the purple circles with crosses are the elements of $E_p'$. 

We want to understand the asymptotic densities of subsets of primitive integers subject to modular constraints, which motivates the following. 

\begin{figure}[b]
    \centering
    \begin{subfigure}[b]{0.3\textwidth}
        \centering
    \includegraphics[width=\textwidth]{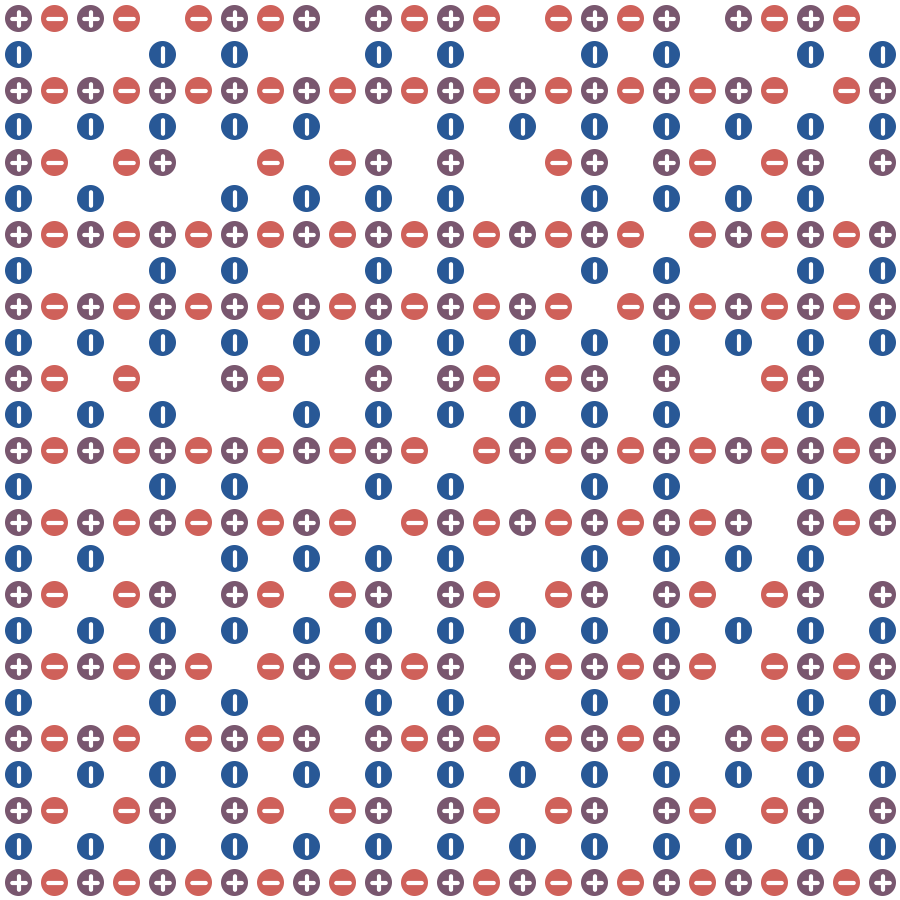}
        \caption{$ p=2$}
        \label{fig:q4}
    \end{subfigure}
    \qquad\qquad
    \begin{subfigure}[b]{0.3\textwidth}
        \centering
        \includegraphics[width=\textwidth]{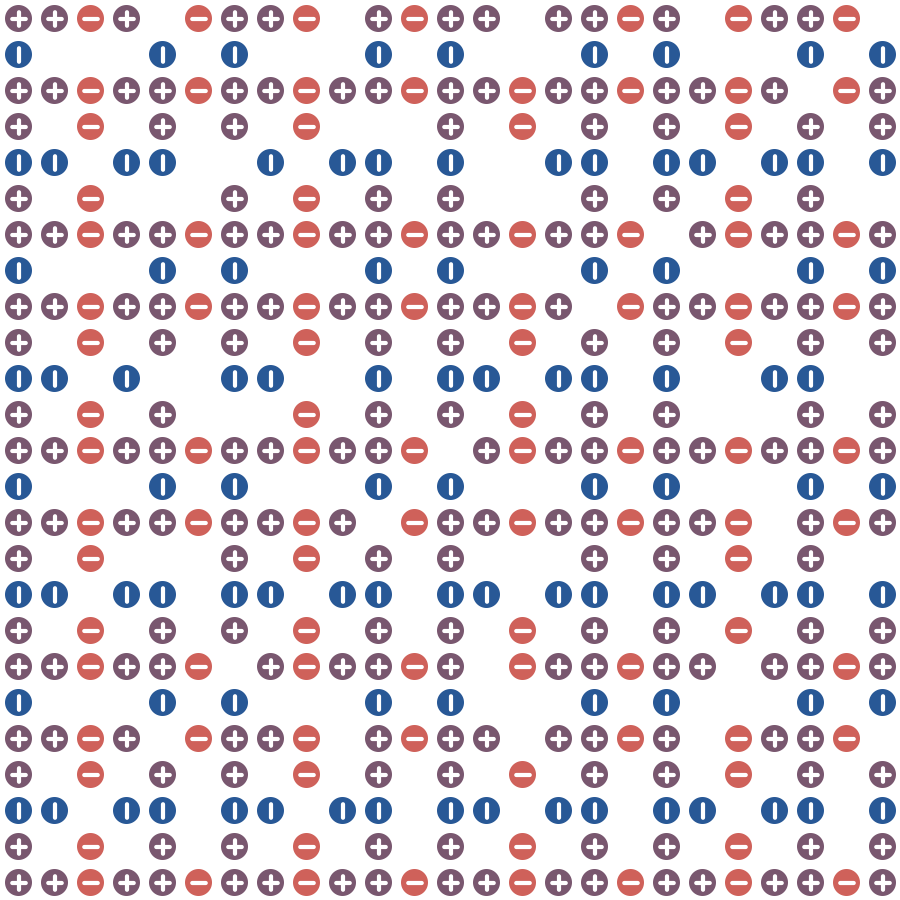}
        \caption{$p=3$}
        \label{fig:q6}
    \end{subfigure}
    \caption{The positive primitive pairs $(a,b)$, shaded dark blue with vertical dashes when $(a,b)\in C_p'$, shaded light red with horizontal dashes when $(a,b)\in D_p'$, or shaded purple with crosses when $(a,b)$ is in $C_p' \cap D_p'$.} 
    \label{fig:posprim}
\end{figure}

    \begin{lemma}\label{lem:counting_rectangles}
        Let $\alpha, \beta \in (0,1]$. Let $p$ be prime and let $a_0$ be an integer $1\leq a_0<p$
    \begin{equation}\label{eq:bigeq2}
    \lim_{R\to\infty}\frac{1}{R^2}\cdot
    \#\bigg\{
        \!\!\begin{tabular}{c}
              $(a,b) \in \Z^2_{prim}\cap [0,\alpha R]\times [0, \beta R]$ such that\\
               $a\equiv a_0\bmod p$ and $b\not\equiv 0 \bmod{p}$
        \end{tabular}\!\! \bigg\} =  \frac{6}{\pi^2}\cdot \frac{\alpha \beta}{p+1}
        \end{equation}
    \end{lemma}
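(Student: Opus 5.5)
We will use Möbius inversion with the congruence conditions built into the counting function. Write $N(R)$ for the count in \eqref{eq:bigeq2}. First observe that if $a\equiv a_0\not\equiv 0 \bmod p$, then $p\nmid a$. Since $\gcd(a,b)=1$ forces $d\nmid$ both unless $d=1$, every $d$ dividing both $a$ and $b$ is coprime to $p$. Detecting coprimality by $\sum_{d\mid \gcd(a,b)}\mu(d)$ therefore gives
\[
N(R)=\sum_{\substack{d\le R\\ p\nmid d}}\mu(d)\,\#\{(a',b')\in[0,\alpha R/d]\times[0,\beta R/d]:\ da'\equiv a_0,\ db'\not\equiv 0 \bmod p\}.
\]
Since $p\nmid d$, $d$ is invertible mod $p$, so the inner conditions become $a'\equiv d^{-1}a_0 \bmod p$ (one residue class) and $b'\not\equiv 0\bmod p$ (a union of $p-1$ classes). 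The inner count is therefore
\[
\Big(\frac{\alpha R}{pd}+O(1)\Big)\Big(\frac{(p-1)\beta R}{pd}+O(1)\Big)=\frac{(p-1)\alpha\beta R^2}{p^2d^2}+O\Big(\frac{R}{d}+1\Big).
\]

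Summing over $d\le R$, the error contributes $O(R\log R)=o(R^2)$. The main term becomes
\[
\frac{(p-1)\alpha\beta}{p^2}R^2\sum_{p\nmid d}\frac{\mu(d)}{d^2}+O(R),
\]
where completing the series costs $O(R^2\sum_{d>R}d^{-2})=O(R)$. The last step is the Euler product identity
\[
\sum_{p\nmid d}\frac{\mu(d)}{d^2}=\prod_{\ell\ne p}\Big(1-\frac1{\ell^2}\Big)=\frac{6}{\pi^2}\Big(1-\frac1{p^2}\Big)^{-1}=\frac{6}{\pi^2}\cdot\frac{p^2}{(p-1)(p+1)}.
\]
Multiplying out, the limit is $\frac{6}{\pi^2}\frac{\alpha\beta}{p+1}$, as claimed.

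The main point needing care is the first step. The coprimality to $p$ of the Möbius variable must be justified, since it is what makes the congruence conditions transform cleanly under $a=da'$, $b=db'$. One could alternatively not restrict $d$ and note that terms with $p\mid d$ vanish because $da'\equiv a_0$ is impossible. The remaining steps are routine: the error bounds are uniform in $d$, and one should note that the ranges $[0,\alpha R]$ include $0$. The point $a=0$ never satisfies $a\equiv a_0$, and $b=0$ is excluded by $b\not\equiv 0$, so boundary points cause no trouble.
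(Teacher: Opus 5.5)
Your proof is correct and follows the paper's argument essentially step for step: M\"obius inversion with the sum restricted to $\gcd(d,p)=1$, inverting $d$ modulo $p$, $O(1)$ errors in the two residue counts, and the Euler product with the factor at $p$ removed. Your explicit $O(R)$ bound for completing the series is a bit cleaner than the paper's treatment of the tail, and the only fix needed is that the restriction $\gcd(d,p)=1$ should rest on $a\not\equiv 0 \bmod p$ alone (the M\"obius sum runs over all pairs satisfying the congruences, not just coprime ones), so drop the clause invoking $\gcd(a,b)=1$ there.
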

    \begin{proof}
            Set 
            \[P(R) =\#\bigg\{
        \!\!\begin{tabular}{c}
              $(a,b) \in \Z^2_{prim}\cap [0,\alpha R]\times [0, \beta R]$ such that\\
               $a\equiv a_0\bmod p$ and $b\not\equiv 0 \bmod{p}$
        \end{tabular}\!\! \bigg\}. \]
        Recall the M{\"o}bius function satisfies $\sum_{d|n} \mu(d) = 1$ exactly when $n=1$, and otherwise is zero. Therefore, \[\sum_{\substack{d\in \Z\\ d|a, d|b}} \mu(d)=\left.\begin{cases}
        1 & \textup{if $\gcd(a,b)=1$} \\ 
        0 & \textup{if $\gcd(a,b)>1$}
        \end{cases}\right\},\]        
        which allows us to write
        \[P(R) = \sum_{\substack{1\leq a\leq \alpha R \\1\leq b\leq \beta R\\ a \equiv a_0 \bmod{p}\\ b \not\equiv 0 \bmod{p}}} \sum_{\substack{d\in \Z\\ d|a, d|b}} \mu(d).\]

        We now want to interchange the order of the finite sum. Rewrite $a = dx$ and $b = dy$ for integers $x,y$, where the bounds become $0\leq x \leq \frac{\alpha R}{d}$ and $0\leq y \leq \frac{\beta R}{d}.$ Since neither $a$ nor $b$ is divisible by~$p$, this implies each $d$ contributing to the sum satisfies $\gcd(d,p) = 1$. Moreover this implies $d$ is invertible modulo $p$ giving the congruence conditions $x \equiv a_0 d^{-1} \bmod{p}$ and $y \not\equiv 0 \bmod{p}.$ Thus, we have rewritten the sum to be
        \begin{equation*} \label{eq:reordering} P(R) = \sum_{\substack{d\leq \min\left\{\alpha R, \beta R\right\}\\ \gcd(d,p) = 1}} \mu(d) \cdot \#\left\{1\leq x \leq \frac{\alpha R}{d}: x \equiv a_0 d^{-1} \bmod {p}\right\} \cdot\#\left\{1\leq y \leq \frac{\beta R}{d}: y\not\equiv 0 \bmod{p} \right\}.\end{equation*}

        The number of integers with a fixed congruence class modulo $p$ is exactly $1/p$ of the integers, up to an error term of whether or not $\frac{\alpha R}{d}$ is an integer. Thus,
        $$\#\{1\leq x \leq \frac{\alpha R}{d}: x \equiv a_0 d^{-1} \bmod {p}\} = \frac{\alpha R}{p d} + O(1)$$
        and
        $$\#\{1\leq y \leq \frac{\beta R}{d}: y\not\equiv 0 \bmod{p} \} = \frac{\beta R(p-1)}{d p} + O(1).$$
        As a consequence,
        \begin{equation}\label{eq:sum_to_compute} P(R) = \frac{\alpha \beta (p-1)}{p^2} R^2\sum_{\substack{d\leq \min\{\alpha R, \beta R\} \\ \gcd(d,p) = 1}} \frac{\mu(d)}{d^2}  +O\left(\frac{R}{d} + 1\right).\end{equation}

        To compute the sum, recall the identity with the Riemann $\zeta$ function
        \[\sum_{d\geq 1} \frac{\mu(d)}{d^2} = \prod_{t \text{ prime}} \left(1-\frac{1}{t^2}\right) = \frac{1}{\zeta(2)} = \frac{6}{\pi^2}. \]
        The coprime condition removes the prime $p$ so the sum evaluates to
        \begin{equation}\label{eq:summationmu}\sum_{\substack{d\geq 1\\\gcd(d,p) = 1}} \frac{\mu(d)}{d^2} = \frac{6}{\pi^2}\frac{p^2}{p^2-1}. \end{equation}
        Since the sum in Equation~\eqref{eq:sum_to_compute} converges, for fixed $R$ the value in Equation~\eqref{eq:summationmu} is the leading term with error $O(1)$.
        Moreover $\sum_{d\leq R} \frac{1}{d} =O( \log(R))$ and $\sum_{d\leq R} 1 =O(R)$, so
        \begin{equation*} P(R) = \frac{6}{\pi^2} \frac{\alpha \beta}{(p+1)}R^2 + O(R\log(R)),\end{equation*}
        from which Equation~\eqref{eq:bigeq2} follows.
        \end{proof}

\Cref{lem:counting_rectangles} allows us to compute formulas for the asymptotic densities for $C'_p$ and $D'_p$ but also their intersection $E'_p$. In these calculations we now expand our domains to include negative values of $a$ and $b$.

\begin{corollary}\label{cor:prime_density}
    Let $p$ be prime. The asymptotic densities for $C_p'$ and $D_p'$ are 
\[\lim_{R\to\infty}\frac{\#\{C'_{p}\cap ([-\alpha R,\alpha R]\times [-\beta R,\beta R])\}}{(2\alpha R)(2\beta R)} = \lim_{R\to\infty}\frac{\#\{D_{p}'\cap ([-\alpha R,\alpha R]\times [-\beta R,\beta R])\}}{(2\alpha R)(2\beta R)} = \frac{6}{\pi^2} \frac{p}{p+1}\]
and the asymptotic density for $E_p'$ is 

\[\lim_{R\to\infty}\frac{\#\{E'_{p}\cap ([-\alpha R,\alpha R]\times [-\beta R,\beta R])\}}{(2\alpha R)(2\beta R)} = \frac{6}{\pi^2} \frac{p-1}{p+1}.\]
\end{corollary}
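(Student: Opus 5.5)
We prove the statement for $E_p'$ first and then obtain $C_p'$ and $D_p'$ by adding the contribution of points with one coordinate divisible by $p$. Throughout, the main tool is \Cref{lem:counting_rectangles}, and the negative values of $a$ and $b$ are handled by symmetry.

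\textbf{Reduction to the first quadrant.} The maps $(a,b)\mapsto(\pm a,\pm b)$ preserve primitivity and the conditions $a\not\equiv 0$, $b\not\equiv 0 \bmod p$. Points lying on the coordinate axes number $O(1)$, namely $(\pm1,0)$ and $(0,\pm1)$. Hence each count over $[-\alpha R,\alpha R]\times[-\beta R,\beta R]$ equals four times the count over $[0,\alpha R]\times[0,\beta R]$, up to $O(1)$. The denominator is $4\alpha\beta R^2$, so it suffices to show that the first-quadrant counts, divided by $R^2$, tend to $\frac{6}{\pi^2}\frac{p\,\alpha\beta}{p+1}$ for $C_p'$ and $D_p'$, and to $\frac{6}{\pi^2}\frac{(p-1)\alpha\beta}{p+1}$ for $E_p'$.

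\textbf{The set $E_p'$.} In the first quadrant, $E_p'$ is the disjoint union over $a_0=1,\dots,p-1$ of the sets counted in \Cref{lem:counting_rectangles}. Summing the $p-1$ limits gives $\frac{6}{\pi^2}\frac{(p-1)\alpha\beta}{p+1}$, as required.

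\textbf{The sets $C_p'$ and $D_p'$.} We have $C_p'=E_p'\sqcup\{(a,b)\in\Z^2_{prim}: a\not\equiv0,\ b\equiv 0 \bmod p\}$. For a primitive pair, $b\equiv 0\bmod p$ already forces $a\not\equiv 0\bmod p$, so the extra set is simply the primitive pairs with $p\mid b$. The cleanest route is complementary counting: the primitive pairs in the rectangle split as $E_p'$, those with $p\mid a$, and those with $p\mid b$, and these three sets are disjoint. The total has density $\frac{6}{\pi^2}\alpha\beta$ by the primitive Gauss count on rectangles, which is the case $p=1$ of the lemma's M\"obius argument, or the shape independence cited from \cite{Apostol}.

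The remaining two sets have equal densities. To see this, swap the roles of $a$ and $b$ and of $\alpha$ and $\beta$; the product $\alpha\beta$ is symmetric, and the lemma's proof is symmetric in the two coordinates. Each of the two sets therefore has density
\[\frac{1}{2}\cdot\frac{6}{\pi^2}\alpha\beta\left(1-\frac{p-1}{p+1}\right)=\frac{6}{\pi^2}\frac{\alpha\beta}{p+1}.\]
Adding this to the density of $E_p'$ gives $\frac{6}{\pi^2}\frac{p\,\alpha\beta}{p+1}$ for $C_p'$, and the same value for $D_p'$ by symmetry.

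\textbf{Main obstacle.} The delicate step is justifying that the sets with $p\mid a$ and with $p\mid b$ have equal density. The symmetry argument uses that the density is bilinear in $(\alpha,\beta)$, which needs care. If it feels shaky, we will instead count the set with $p\mid b$ directly, rerunning the M\"obius computation of \Cref{lem:counting_rectangles} with the $y$-condition replaced by $y\equiv 0\bmod p$. Since $p\nmid d$, the $y$-count becomes $\frac{\beta R}{pd}+O(1)$ and the $x$-count becomes $\frac{(p-1)\alpha R}{pd}+O(1)$. This yields $\frac{(p-1)\alpha\beta}{p^2}\cdot\frac{6}{\pi^2}\frac{p^2}{p^2-1}=\frac{6}{\pi^2}\frac{\alpha\beta}{p+1}$, confirming the value directly.
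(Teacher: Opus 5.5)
Your proposal is correct and follows the paper's route: sum \Cref{lem:counting_rectangles} over the $p-1$ residues $a_0$ to get $E_p'$, then use complementary counting against the $6/\pi^2$ density of $\Z^2_{prim}$, together with the $a\leftrightarrow b$ symmetry, to get $C_p'\setminus E_p'$ and $D_p'\setminus E_p'$. Your worry about the symmetry step is justified: for $\alpha\neq\beta$, swapping coordinates also swaps the side lengths of the rectangle, which the paper's one-line appeal to symmetry passes over even though \Cref{cor:CDdensity} later uses $\beta=1/\sqrt{p}$. Your direct M\"obius count with $y\equiv 0\bmod p$ (and its mirror with $x\equiv 0\bmod p$) closes this gap cleanly, so present it as the main argument rather than as a fallback.
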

\begin{proof}
Apply \Cref{lem:counting_rectangles} and sum over equal contributions from the $p-1$ residue classes $1\leq a_0\leq p-1$ to arrive at the above asymptotic density formula for $E'_p$. Noting the symmetry between $C'_p$ and $D'_p$ and that the asymptotic density of $\Z^2_{prim}$ is $6/\pi^2$ implies that the asymptotic densities for $C'_p\setminus E'_p$ and $D'_p\setminus E'_p$ are both $\frac{6}{\pi^2}\frac{1}{(p+1)}$, from which the asymptotic densities for $C'_p$ and $D'_p$ follow. 
\end{proof}

\begin{remark}
\label{rem:equal}
\Cref{cor:prime_density} proves that when $p=2$, the asymptotic densities of the blue, red, and purple sets in the square in \Cref{fig:posprim}(a) are the same; they each equal $2/\pi^2$.
\end{remark}

\begin{corollary}\label{cor:CDdensity}
Let $p=2,3$ and $q=2p$. The asymptotic densities for $C_{q}$ and $D_{q}$ are 
    \[\lim_{R\to\infty}
     \frac{
     \#\{C_{q}\cap ([-R,R]^2)\}
     }{4R^2} 
     = 
     \lim_{R\to\infty}
     \frac{
     \#\{D_{q}\cap ([-R,R]^2)\}
     }{4R^2} 
     = \frac{1}{\sqrt{p}}\frac{6}{\pi^2}\frac{p}{p+1}.\]
\end{corollary}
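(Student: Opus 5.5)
The plan is to reduce the count of $C_q$ in the square $[-R,R]^2$ to a count of $C_p'$ in a suitably scaled rectangle, and then read off the answer from \Cref{cor:prime_density}. Here $p=q/2$ and $\lambda_q=\sqrt{p}$.

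\textbf{Step 1: rescale.} By \Cref{prop:description}, the map $(a,b)\mapsto (a,b\lambda_q)$ is a bijection from $C_p'$ onto $C_q$. A point $(a,b\lambda_q)$ lies in $[-R,R]^2$ exactly when $|a|\le R$ and $|b|\le R/\sqrt{p}$. Hence
\[\#\{C_q\cap[-R,R]^2\}=\#\{C_p'\cap([-\alpha R,\alpha R]\times[-\beta R,\beta R])\},\qquad \alpha=1,\ \beta=\tfrac{1}{\sqrt p}.\]
Both $\alpha$ and $\beta$ lie in $(0,1]$, as \Cref{lem:counting_rectangles} requires.

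\textbf{Step 2: apply the density.} Divide by $4R^2=(2\alpha R)(2\beta R)\cdot\sqrt p$. By \Cref{cor:prime_density}, the limit equals
\[\frac{1}{\sqrt p}\cdot\frac{6}{\pi^2}\frac{p}{p+1}.\]

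\textbf{Step 3: the set $D_q$.} The same argument works with the coordinates swapped. The map $(a,b)\mapsto(a\lambda_q,b)$ sends $D_p'$ bijectively onto $D_q$, and the relevant rectangle now has $\alpha=1/\sqrt p$ and $\beta=1$. \Cref{cor:prime_density} applies to $D_p'$ with the same density, so the limit is the same.

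\textbf{Main obstacle.} The only real point of care is that \Cref{cor:prime_density} must hold for unequal $\alpha,\beta$ in $(0,1]$, which its statement allows. One should also check that the sign symmetry used in the corollary still holds: the conditions $a\not\equiv 0$ and $b\not\equiv 0 \bmod p$ are invariant under negating either coordinate. The axis points contribute only $O(R)$ to the count, so they do not affect the limit.

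As a consistency check on the constant, note that $\frac{1}{\sqrt p}\frac{p}{p+1}=\frac{\lambda_q}{\lambda_q^2+1}$, which matches the form used in the proof of \Cref{thm:density}.
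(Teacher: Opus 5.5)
Your proposal is correct and follows essentially the same route as the paper. Both arguments pull $C_q$ and $D_q$ back to $C_p'$ and $D_p'$ by undoing the vertical (resp.\ horizontal) scaling by $\lambda_q=\sqrt{p}$, apply \Cref{cor:prime_density} with $(\alpha,\beta)=(1,1/\sqrt{p})$ and $(1/\sqrt{p},1)$, and account for the factor $\sqrt{p}$ between $4R^2$ and the rectangle area $(2\alpha R)(2\beta R)$.
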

\begin{proof}
Apply Corollary~\ref{cor:prime_density}. For $C_p'$, use $\alpha=1$ and $\beta=1/\sqrt{p}$; for $D_p'$ use $\alpha=1/\sqrt{p}$ and $\beta=1$ so
    \[\lim_{R\to\infty}
     \frac{
     \#\big\{C'_p\cap 
     \big([-R,R]\times[-\frac{1}{\sqrt{p}}R,\frac{1}{\sqrt{p}}R]\big)\big\}
     }{(2R)\big(\frac{2}{\sqrt{p}}R\big)} 
     = 
     \lim_{R\to\infty}
     \frac{
     \#\big\{D'_p\cap 
     \big([-\frac{1}{\sqrt{p}}R,\frac{1}{\sqrt{p}}R]\times[-R,R]\big)\big\}
     }{\big(\frac{2}{\sqrt{p}}R\big)(2R)} 
     = \frac{6}{\pi^2} \frac{p}{p+1}.\]
    Scaling the points in $C'_p$ vertically and $D'_p$ horizontally by a factor of $\sqrt{p}$ does not change the number of points contributing to the numerator. Thus,
    \[\lim_{R\to\infty}
     \frac{
     \#\{C_{q}\cap ([-R,R]^2)\}
     }{(2R)\big(\frac{2}{\sqrt{p}}R\big)} 
     = 
     \lim_{R\to\infty}
     \frac{
     \#\{D_{q}\cap ([-R,R]^2)\}
     }{\big(\frac{2}{\sqrt{p}}R\big)(2R)} 
     = \frac{6}{\pi^2} \frac{p}{p+1}.\]
     Corollary~\ref{cor:CDdensity} follows directly.
\end{proof}

\section{Concluding remarks}
\label{sec:conclusion}
This work originated through an investigation into the structure of the visualizations in \Cref{fig:TriangleGroupOrbitsq346}. After scaling the sets $C_q$ and $D_q$ and noticing the partition of the primitive pairs into three disjoint sets, we were surprised to learn that these sets were asymptotically equinumerous. (See \Cref{rem:equal}.) 

This leads us to ask whether other partitions of the primitive integers have structure of interest. 

\begin{question}
What can be said about subsets or partitions of $\Z^d_{prim}$ that are not related to congruences modulo  $p$? 
\end{question}

Our results in \Cref{sec:primes} for $p=2,3$ apply to the Hecke triangle groups, although they were proven in full generality for all primes $p$. 

\begin{question}
What applications are there for \Cref{cor:prime_density} for $p\ne 2,3$?
\end{question}

Finally, we wonder what other methods might work for other arithmetic non-uniform lattices.

\begin{question}
Is there a class of arithmetic non-uniform lattices that lends itself to similarly elementary methods?
\end{question}

\section*{Acknowledgments}

We gratefully acknowledge the support of the Institut Henri Poincar\'e (UAR 839 CNRS-Sorbonne Université), and LabEx CARMIN (ANR-10-LABX-59-01). This project was started through a collaboration during the {\em Illustration as a Mathematical Research Technique} trimester in January~2026.

\bibliographystyle{alpha}
\bibliography{Sources}
\end{document}